\def\BibTeX{{\rm B\kern-.05em{\sc i\kern-.025em b}\kern-.08em
    T\kern-.1667em\lower.7ex\hbox{E}\kern-.125emX}}
\newtheorem{thm}{Theorem}
\newtheorem{cor}{Corollary}
\newtheorem{Def}{Definition}
\begin{document}
\title{Sufficient Stability Conditions for  a Class of Switched Systems with Multiple Steady States}
\author{Jacopo Piccini$^{1}$, Elias August$^{1}$, Sigurdur Hafstein$^{2}$, and Stefania Andersen$^{2}$
\thanks{This work has received funding from European Union’s Horizon 2020 research and innovation programme under grant agreement no. 965417 and was supported in part by the Icelandic Research Fund under Grant 228725-051.}
\thanks{$^{1}$Jacopo Piccini and Elias August are with the Department of Engineering, Reykjavik University,
Menntavegur 1, 102 Reykjavik, Iceland (e-mail: jacopop@ru.is; eliasaugust@ru.is)}
\thanks{$^{2}$Sigurdur Hafstein and Stefania Andersen are with the Faculty of Physical Sciences, University of Iceland,
	Dunhagi 5, 107 Reykjavik, Iceland (e-mail: shafstein@hi.is; saa20@hi.is)}
}

\maketitle
\thispagestyle{empty}
\begin{abstract}
In this paper, we present a novel approach to determine the stability of switched linear and nonlinear systems using Sum of Squares optimisation. Particularly, we use Sum of Squares optimisation to search for a Lyapunov function that defines an absorbing set that confines solution trajectories. For linear systems, we show that this also implies global asymptotic stability. Using this approach, we can study stability for a broader range of switched systems, particularly, we can search for a global attractor for switched nonlinear systems, whose dynamics are given by polynomial vector fields and which have multiple equilibria or limit cycles.
\end{abstract}

\begin{IEEEkeywords}
LMIs, Lyapunov methods, switched systems.
\end{IEEEkeywords}

\section{Introduction}
\label{sec:introduction}
\IEEEPARstart{S}{witched} systems are used for modelling in many different fields~\cite{Liberzon2003}. Examples for their use range from the biological sciences~\cite{ElFarra2005},
for example, the dynamics of human body thermoregulation during sleep abruptly changes
when sleep transitions from non-rapid eye movement (REM) sleep to REM sleep~\cite{Harding2020},
to mechanical engineering, where one example is the dynamics of an engine
with shifting gears~\cite{Shorten2007}. Switched systems can describe systems, whose dynamics are affected by instantaneous changes, by considering a set of continuous-time sub-systems and a rule governing the switching between them. Analysing switched systems is also important in the field of hybrid systems~\cite{Liberzon2003}.

We consider switched systems in continuous-time that are of the following form,
\begin{equation}
	\dot{x}= f_{\sigma}(x),\ \ x\in\mathbb{R}^n,\ \sigma\colon [0,\infty) \to \{1, 2, \ldots, N\}. \label{eq:Main}
\end{equation}
Vector $x$ denotes the state of the system and $\sigma$ is the switching signal;
given time-point $t$, the system dynamics is governed by function $f_i:\mathbb{R}^n\rightarrow \mathbb{R}^n$, where $\sigma(t)=i$.
That is, there are $N$ functions and which one is ``on" is defined by switching signal $\sigma$.
In this paper, we assume arbitrary switching, that is, switching signal $\sigma$ fulfils the technical assumption that there is only a finite number of discontinuity-points (switchings) on every finite time-interval, but is otherwise arbitrary.
Furthermore, we assume that the $f_i$ are linear or polynomial functions.

Determining stability of an equilibrium point for~\eqref{eq:Main}, that is of a point $x^*\in\mathbb{R}^n$, for  which $f_i(x^*)=0$ for $i=1,2,\ldots,N$, is often difficult, even for linear functions $f_i$~\cite{Liberzon1999}.
Stability results have been obtained when the dynamics of the switching are non-arbitrary, for instance, when a minimal
amount of time must pass before switching occurs~\cite{Hespanha1999, Shorten2007};
one then speaks of minimal dwell time. However, under arbitrary switching, the problem is notoriously hard and most results and methods used to determine stability of classical linear and nonlinear systems cannot be used. Thus, various works have adapted Lyapunov stability theory to switched systems and considered the construction of a Lyapunov functions for~\eqref{eq:Main}.

Under arbitrary switching, a necessary condition for stability of an equilibrium of~\eqref{eq:Main} is that it is stable for each subsystem given by $\dot{x}= f_i(x)$. Otherwise, if it is not stable for $\dot{x}= f_j(x)$ then it cannot be stable for~\eqref{eq:Main}; just set $\sigma(t)=j$ for all $ t$. Often, one seeks to find a common quadratic Lyapunov function~\cite{Shorten2007}, that is, to find a single quadratic Lyapunov function that guarantees the stability of each individual subsystem. If such a function exists then the equilibrium is also stable for the switched system.  In~\cite{Shorten1998, King2006}, algebraic conditions for the existence of a common quadratic Lyapunov function were defined. Arguably, those conditions on the system matrices seem rather restrictive.

A less conservative approach is to search for a common piecewise linear or polynomial Lyapunov function~\cite{Lin2009, Branicky1994}. In~\cite{Johansson1998}, a method for the computation of piecewise quadratic Lyapunov functions is presented. In the literature, there are mainly two approaches to compute such Lyapunov functions~\cite{Polanski1997, Hafstein2015}. One does so by means of either semidefinite programming or linear programming~\cite{Andersen2023}.

A different approach for computing a Lyapunov function is based on Sum of Squares (SOS) optimisation~\cite{Parrilo2003}. The advantage of using SOS optimisation is the ability, for systems whose dynamics are defined by polynomial vector fields, to search for Lyapunov functions that consist of polynomials of higher-order~\cite{Anderson2015}. In~\cite{Prajna2003}, the authors use SOS optimisation to search for polynomial Lyapunov functions and piecewise polynomial Lyapunov functions that guarantee global asymptotic stability for switched systems. In~\cite{CHESI2011621}, the author showed that, for linear switched systems, the existence of a homogenous polynomial Lyapunov function that is SOS is not only sufficient for global asymptotic stability of the equilibrium, but also that such a function,
of potentially high degree, must exist if the equilibrium is asymptotically stable.

In this paper, we present a novel method, based on SOS optimisation, to guarantee stability properties of nonlinear switched systems, whose dynamics are governed by polynomial vector fields. We do so by determining a globally absorbing set for the dynamical system given by~\eqref{eq:Main}. Using this approach, we can study  stability for a broader range of problems. For linear switched systems, we show that the existence of such a set translates to global asymptotic stability of the equilibrium at the origin.  As the Lyapunov function computed is not necessarily homogenous, one can potentially use polynomial Lyapunov function of lower degree than in~\cite{CHESI2011621}.

Other approaches have considered invariant sets as a tool to prove stability for switched systems. For example, in~\cite{Dorothy2016}, a stability theorem for switched systems is provided, where each subsystem possesses an invariant set. In~\cite{KUIAVA2013206} and in~\cite{Veer2020}, the notion of invariant sets and boundedness are invoked to prove stability of switched systems with multiple equilibria. However, all these results depend on nonzero dwell time.

The contributions of this paper are for switched systems, whose dynamics can be described by polynomial vector fields, and are the following. First, by relaxing the problem to determining a globally absorbing set for~\eqref{eq:Main} instead of proving global asymptotic stability directly by means of a common Lyapunov function, we can reduce the size of the problem and, thus, the computational effort by searching for Lyapunov functions that potentially consist of non-homogenous polynomials of lower order. Moreover, for switched systems with multiple equilibrium points or limit cycles, our approach provides novel means to establish stability and to characterise the attractor if it exists. This is an extension of the results for the same class of switched systems presented in~\cite{Prajna2003}, which provides stability certificates for switched system with a common equilibrium point for the subsystems. For linear systems, the approach presented in this paper scales better with system dimension than the one presented in~\cite{Andersen2023}, because no triangulation of the state-space is needed.

The remainder of the paper is organised as follows. Section~\ref{sec_abs_set} defines an absorbing set and shows how to search for one. Section~\ref{sec_stab} shows that for linear switched systems, the existence of an absorbing set translates to global asymptotic stability of the equilibrium point. In Section~\ref{examples}, we apply our method to different examples. Finally, we conclude the paper in Section~\ref{conc}.

\section{Obtaining an Absorbing Set}\label{sec_abs_set}
We investigate the stability of a switched system by searching for a certificate for ultimate boundedness of
the switched system and by using it to determine an absorbing set of~\eqref{eq:Main}. We do so by computing a Lyapunov function, $V(x)$, that is monotonically decreasing along all solution trajectories outside of the absorbing set. The notion of an absorbing set is  not only useful for dynamical systems with a unique equilibrium, but also for systems that have multiple equilibria, limit cycles, and/or strange attractors.

\subsection{Absorbing Sets}
An absorbing set is a special kind of positively invariant set. For this reason, we provide the following definitions~\cite{HKK00}.
\begin{Def}
	Let $x(t)$ be a solution of the dynamical system
	$\dot x=f(x)$, which we assume to exist for all $t$. Then $p$ is said to be a positive limit point of $x(t)$ if there is
	a sequence $\{t_n\}$, with $t_n\rightarrow\infty$ as $n\rightarrow\infty$, such that $x(t_n)\rightarrow p$ as
	$n\rightarrow\infty$. 
\end{Def}
\begin{Def}
	A set $M$ is invariant with respect to $\dot{x}=f(x)$ if
	\begin{equation*}
		x(0)\in M \Rightarrow x(t)\in M,\ \mathrm{for\ all}\ t\in\mathbb{R}.
	\end{equation*}
	This means, that if a solution belongs to $M$ at some time instant,
	then it belongs to $M$ for all time.
\end{Def}
\begin{Def}
	\label{def:newdef}
	For \eqref{eq:Main} and a switching signal $\sigma$ denote by $t\mapsto \phi_\sigma(t,x_0)$ its solution starting at $x(0) = x_0$. A set $M\subset \mathbb{R}^n$ is said to be \emph{positively invariant} for \eqref{eq:Main}, if for every switching signal $\sigma$ we have
	\begin{equation*}
		x_0\in M \Rightarrow \phi_\sigma(t,x_0)\in M,\ \mathrm{for\ all}\ t \ge 0.
	\end{equation*}
	A positively invariant set $M$ for~\eqref{eq:Main} is said to be an \emph{absorbing set} if additionally, for every compact set $C\subset \mathbb{R}^n$ there exist a time $t_C^*\ge 0$ such that for every switching signal $\sigma$
	\begin{equation*}
		x_0\in C \Rightarrow \phi_\sigma(t,x_0)\in M,\ \mathrm{for\ all}\ t \ge t_C^*.
	\end{equation*}
\end{Def}
\ \\
Now, for a switched system, the following theorem provides conditions
for set $\mathcal{B}$ to be a globally, uniformly  attractive, positively invariant set, that is, an absorbing set.

\begin{thm}\label{thm0}
	Assume that for \eqref{eq:Main} there exists a continuously differentiable function $V\colon \mathbb{R}^n\to \mathbb{R}$, a compact set $\mathcal{B}\subset \mathbb{R}^n$, and constants $\gamma\in \mathbb{R}$, $\rho>0$, such that
	\begin{align*}
		V(x)=\gamma\ \ \  & \text{for all $x\in\partial \mathcal{B}$,}\\
		V(x)> \gamma\ \ \  & \text{for all $x\in\mathbb{R}^n\setminus \mathcal{B}$,}\\
		\dot{V}(x) \le -\rho\ \ \  & \text{for all $x\in\mathbb{R}^n\setminus \mathcal{B}$.}
	\end{align*}
	Here
	\begin{align*}
		\dot{V}(x)&:=\max_{i=1,2,\ldots,N} \nabla V(x)\cdot f_i(x).
	\end{align*}
	Then $\mathcal{B}$ is an absorbing set for~\eqref{eq:Main}.
	
	\begin{proof}
		The proof of the theorem follows from standard arguments when using Lyapunov stability theory:
		As long as $x=\phi_\sigma(t,x_0) \notin  \mathcal{B}$ we have the condition
		that $V(x)$ monotonically decreases with time, $\dot{V}(x) \le -\rho$, which means that
		$$
		V(\phi_\sigma(t,x_0))-V(x_0) \le \int_0^t \dot{V}(\phi_\sigma(s,x_0)) ds \le -\rho t.
		$$
		First, it follows from this inequality that if $x_0\in \partial \mathcal{B}$, then the existence of a switching signal $\sigma$ and time $t>0$ such that $\phi_\sigma(s,x_0)\notin \mathcal{B}$ for $0<s \le t$ is impossible, because of $\gamma=V(x_0)<V(\phi_\sigma(t,x_0))$.
		Hence, $\mathcal{B}$ is positively invariant.
		
		Second, again by the inequality, $\mathcal{B}$ must be absorbing.  Indeed, fix a compact $C\subset \mathbb{R}^n$ and note that if $C\subset \mathcal{B}$ we can take $t^*_C=0$.  Otherwise denote by $\overline{V}$ the maximum value of $V$ on the closure of $C\setminus\mathcal{B}$ and set $t^*_C=(\overline{V}-\gamma)/\rho>0$ and note that for
		an arbitrary $x_0 \in C\setminus \mathcal{B}$ we have
		$$
		\rho t^*_C\ge V(x_0)-\gamma \ge  V(x_0)-V(\phi_\sigma(t,x_0)) \ge \rho t
		$$
		as long as $\phi_\sigma(t,x_0) \notin  \mathcal{B}$.  Thus, there exists
		a $t^*(x_0) \le t^*_C$ such that $ \phi_\sigma(t,x_0) \in \mathcal{B}$ for all $t\ge t^*(x_0)$ and $\mathcal{B}$ is absorbing.
	\end{proof}
\end{thm}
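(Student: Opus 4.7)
The plan is a standard Lyapunov-type argument adapted to arbitrary switching, built on the observation that along any trajectory the instantaneous rate $\nabla V(\phi_\sigma(t,x_0))\cdot f_{\sigma(t)}(\phi_\sigma(t,x_0))$ is dominated pointwise by $\dot V(\phi_\sigma(t,x_0)) \le -\rho$ whenever the trajectory lies outside $\mathcal{B}$. The workhorse will be the comparison inequality
\begin{equation*}
V(\phi_\sigma(t,x_0)) - V(x_0) \le -\rho\, t,
\end{equation*}
valid on any interval during which the trajectory remains in $\mathbb{R}^n\setminus \mathcal{B}$. Since the switching signal has only finitely many discontinuities on every bounded interval, $V\circ \phi_\sigma$ is globally continuous and piecewise $C^1$, so I would obtain the bound by applying the fundamental theorem of calculus on each open switching subinterval and summing.

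With the comparison inequality in hand, positive invariance follows by contradiction. Suppose $x_0\in \mathcal{B}$ but some trajectory satisfies $\phi_\sigma(t_1,x_0)\notin \mathcal{B}$ for some $t_1>0$. Continuity of the trajectory and closedness of $\mathcal{B}$ yield a last time $t_0\in[0,t_1)$ with $\phi_\sigma(t_0,x_0)\in \partial \mathcal{B}$, so that $V(\phi_\sigma(t_0,x_0))=\gamma$ while $\phi_\sigma(s,x_0)\notin \mathcal{B}$ for $s\in(t_0,t_1]$. Applying the inequality on $[t_0,t_1]$ gives $V(\phi_\sigma(t_1,x_0)) \le \gamma - \rho(t_1-t_0) < \gamma$, contradicting the assumption $V>\gamma$ outside $\mathcal{B}$.

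For the absorbing property, I fix a compact set $C$ and, assuming $C\not\subset \mathcal{B}$, define $\overline V := \max_{x\in \overline{C\setminus \mathcal{B}}} V(x)$ and $t^*_C := (\overline V - \gamma)/\rho$; both are finite by continuity of $V$ and compactness of the closure. For any $x_0\in C$ and signal $\sigma$, if the trajectory were to remain outside $\mathcal{B}$ throughout $[0, t^*_C]$, the comparison inequality would force $V(\phi_\sigma(t^*_C, x_0)) \le \overline V - \rho\, t^*_C = \gamma$, again violating the strict inequality $V > \gamma$ on $\mathbb{R}^n\setminus \mathcal{B}$. Hence some $t^*(x_0)\le t^*_C$ brings the trajectory into $\mathcal{B}$, and positive invariance keeps it there. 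The main technical hurdle I anticipate is handling the derivative of $V\circ \phi_\sigma$ cleanly across switching instants; the assumption of finitely many switches per bounded interval reduces this to a routine splicing argument via the fundamental theorem of calculus, rather than anything requiring Filippov or Carathéodory machinery.
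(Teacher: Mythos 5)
Your proposal is correct and follows essentially the same route as the paper's proof: the comparison inequality $V(\phi_\sigma(t,x_0))-V(x_0)\le -\rho t$ outside $\mathcal{B}$, positive invariance via a boundary-crossing contradiction, and the absorbing property via the bound $t^*_C=(\overline V-\gamma)/\rho$ on the closure of $C\setminus\mathcal{B}$. Your extra care with the last-exit time $t_0$ and the splicing of the fundamental theorem of calculus across the finitely many switching instants only makes explicit what the paper leaves implicit.
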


A dynamical system with an absorbing set is said to be ultimately bounded system~\cite{August2023}.
As shown previously,
if a dynamical system possesses an absorbing set, denoted by $\mathcal{B}$, then, not only will all trajectories starting within the absorbing set remain within the set, but also there exists for every $x_0\in \mathbb{R}^n$ a time point $t^*(x_0) \ge 0$ such that $\phi_\sigma(t,x_0) \in \mathcal{B}$ for all $t\ge t^*(x_0)$ and all switching signals $\sigma$.  Indeed, $t^*(x_0)$ can be chosen to imply the same for all solutions starting in a neighbourhood of $x_0$.

Throughout this paper, we use the notion of an absorbing set to obtain stability certificates for switched systems using SOS optimisation~\cite{Papachristodoulou2002, Anderson2015}. To solve SOS optimisation problems,  we use the MATLAB\textregistered~\cite{MATLAB} toolbox SOSTOOLS~\cite{Sostools}.
For more details on SOS optimisation and the SOS decomposition, see the Appendix.

\subsection{Obtaining an Absorbing Set}
To find a Lyapunov function that defines an absorbing set for switched system \eqref{eq:Main}, we apply the approach presented in~\cite{August2023} to all subsystems at once. The following two theorems provide means to obtain an absorbing set.
\begin{thm}\label{thm2}
	Given constants $\beta \geq 0$, and $\delta > 0$ and integer $\ell$, if there exists
	a radially unbounded SOS polynomial $V(x)$, $V(x)\geq\delta||x||_{2\ell}^{2\ell}$, where
	$||x||_{2\ell}^{2\ell}=x_1^{2\ell}+x_2^{2\ell}+\ldots+x_n^{2\ell}$,
	and a SOS polynomial $p_i(x)$ that solve the following SOS programme,
	\begin{equation}
		-f_i(x)\cdot \nabla V(x) -p_i(x)\left(\lVert x \rVert_2^2 - \beta\right) -\delta||x||_{2\ell}^{2\ell}\ \text{ is SOS}\ \forall i.
		\label{eq:AbsSet_ineq}
	\end{equation}
	then there exists $\tilde\beta >0$ such that with $\rho :=\delta \tilde\beta$ we have
	\begin{equation}\label{eq1_may}
		\dot{V}(x) \le -\rho,\ \ \text{for all $||x||_{2\ell}^{2\ell} \ge \tilde\beta$.}
	\end{equation}
	
	\begin{proof}
		It is shown in~\cite{August2023} that~\eqref{eq:AbsSet_ineq} implies that 
		\begin{equation}
			f_i(x)\cdot \nabla V(x)  \le -\delta \tilde\beta < 0,\ \text{for} \ ||x||_2^2 \geq \beta,
		\end{equation}
		and, thus,
		$||x||_{2\ell}^{2\ell} \geq \tilde\beta$, where $\tilde\beta$ is a positive constant, for all $i$.
		It follows that \eqref{eq1_may} holds.
	\end{proof}
\end{thm}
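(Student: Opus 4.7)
The plan is to combine pointwise non-negativity coming from the SOS decomposition with a standard comparison between the $2$-norm and the $2\ell$-norm on $\mathbb{R}^n$.

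First, I would exploit the defining property of SOS polynomials: any polynomial that is a sum of squares is non-negative at every point of $\mathbb{R}^n$. Applied to the expression in~\eqref{eq:AbsSet_ineq} and to each multiplier $p_i$, this yields, after rearrangement,
\[
-f_i(x)\cdot \nabla V(x) \;\ge\; p_i(x)\bigl(\|x\|_2^2 - \beta\bigr) + \delta\|x\|_{2\ell}^{2\ell}.
\]
Restricting to the outer region $\{x:\|x\|_2^2\ge\beta\}$, the product $p_i(x)(\|x\|_2^2-\beta)$ is a product of two non-negative factors and can be dropped, leaving $-f_i(x)\cdot\nabla V(x)\ge \delta\|x\|_{2\ell}^{2\ell}$ for every index $i$. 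This is the step that is imported from~\cite{August2023}.

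Second, I would bridge the two regions by choosing $\tilde\beta$ so that the inclusion $\{\|x\|_{2\ell}^{2\ell}\ge\tilde\beta\}\subseteq\{\|x\|_2^2\ge\beta\}$ holds. Monotonicity of $\ell^p$-norms in finite dimensions gives $\|x\|_{2\ell}\le \|x\|_2$ for $\ell\ge 1$, hence $\|x\|_{2\ell}^{2\ell}\le \|x\|_2^{2\ell}$, so any $\tilde\beta\ge \beta^\ell$ realises the desired inclusion. On that region the bound from the first step applies and yields, for each $i$,
\[
f_i(x)\cdot\nabla V(x) \;\le\; -\delta\|x\|_{2\ell}^{2\ell} \;\le\; -\delta\tilde\beta \;=\; -\rho;
\]
taking the maximum over $i$ then delivers $\dot V(x)\le -\rho$, which is precisely~\eqref{eq1_may}.

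The only delicate point I foresee is pinning down $\tilde\beta$ explicitly from $\beta$ and $\ell$, i.e.\ translating a sublevel set in one norm into a sublevel set in another. The degenerate case $\beta=0$ must be handled separately, where any positive $\tilde\beta$ works and the estimate $-f_i\cdot\nabla V \ge \delta\|x\|_{2\ell}^{2\ell}$ holds on all of $\mathbb{R}^n$ without the need to restrict to an outer region.
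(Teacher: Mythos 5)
Your proposal is correct and follows essentially the same route as the paper, which delegates the key implication to~\cite{August2023}; you simply work out the details that the paper leaves to that reference. In particular, your explicit choice $\tilde\beta \ge \beta^{\ell}$ (via $\|x\|_{2\ell}\le\|x\|_2$, valid for $\ell\ge 1$) correctly pins down the constant that the paper's proof only asserts to exist, and your handling of $\beta=0$ is consistent with the paper's remark that this case reduces to a global common Lyapunov function.
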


\begin{thm}\label{thm3}
	If there exist a positive constant $\gamma$ and a SOS polynomial $q(x)$ such that
	\begin{equation}\label{eq0_may13}
		-(V(x)-\gamma)+q(x)(||x||_2^2-\beta)\ \mathrm{is\ SOS}
	\end{equation}
	then $\mathcal{B} = \{x \in \mathbb{R}^n  | V(x)\leq\gamma \}$
	is an absorbing set of~\eqref{eq:Main}, where $V$ and $\beta$ are from a solution to \eqref{eq:AbsSet_ineq}.
	
	\begin{proof}
		If~\eqref{eq0_may13} holds, where $\beta$ solves~\eqref{eq:AbsSet_ineq}, then
		$||x||_2^2>\beta$ if $V(x) > \gamma$ and
		it follows from Theorem~\ref{thm0} and Theorem~\ref{thm2} that
		solution trajectories will approach the level set given by $V(x)=\gamma$ and, thus, that
		$\mathcal{B} = \{x \in \mathbb{R}^n  | V(x)\leq\gamma \}$
		is an absorbing set of~\eqref{eq:Main}.
	\end{proof}
\end{thm}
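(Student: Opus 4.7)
The plan is to reduce Theorem~\ref{thm3} to Theorem~\ref{thm0} by extracting the three hypotheses of that theorem from the SOS certificate~\eqref{eq0_may13} together with the conclusion of Theorem~\ref{thm2}. The candidate absorbing set is $\mathcal{B}=\{x\in\mathbb{R}^n\mid V(x)\le\gamma\}$, the Lyapunov function is the $V$ already produced by Theorem~\ref{thm2}, and the decay rate is the $\rho=\delta\tilde\beta$ coming from~\eqref{eq1_may}. So the work is purely to verify each hypothesis of Theorem~\ref{thm0}.

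First I would extract the geometric containment $\{V>\gamma\}\subseteq\{\lVert x\rVert_2^2>\beta\}$ from~\eqref{eq0_may13}. This is the crucial step and essentially a one-line Positivstellensatz-style argument: suppose $V(x)>\gamma$, i.e.\ $V(x)-\gamma>0$; since the polynomial $-(V(x)-\gamma)+q(x)(\lVert x\rVert_2^2-\beta)$ is SOS and hence nonnegative, we must have $q(x)(\lVert x\rVert_2^2-\beta)\ge V(x)-\gamma>0$, and because $q(x)\ge 0$ as a SOS polynomial, this forces $\lVert x\rVert_2^2>\beta$. Taking the contrapositive, $\lVert x\rVert_2^2\le\beta$ implies $V(x)\le\gamma$, so the sublevel set $\mathcal{B}$ contains the Euclidean ball $\{\lVert x\rVert_2^2\le\beta\}$, but what we actually need is the converse direction just established.

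Next I would verify the three bulleted conditions of Theorem~\ref{thm0}. Since $V$ is a polynomial it is $C^1$; since $V$ is radially unbounded and continuous, $\mathcal{B}$ is closed and bounded, hence compact. By continuity, $\partial\mathcal{B}\subseteq\{V=\gamma\}$, giving the first condition; the second, $V(x)>\gamma$ on $\mathbb{R}^n\setminus\mathcal{B}$, is the definition of the complement. For the third, pick any $x\in\mathbb{R}^n\setminus\mathcal{B}$; then $V(x)>\gamma$, so by the containment of Step~1 we have $\lVert x\rVert_2^2>\beta$, and Theorem~\ref{thm2} (via the inequality in its proof, $f_i(x)\cdot\nabla V(x)\le-\delta\tilde\beta$ whenever $\lVert x\rVert_2^2\ge\beta$, for every $i$) yields $\dot V(x)=\max_i\nabla V(x)\cdot f_i(x)\le-\rho$. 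All hypotheses of Theorem~\ref{thm0} are met, so $\mathcal{B}$ is absorbing for~\eqref{eq:Main}.

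The main obstacle I anticipate is the first step, i.e.\ carefully justifying the containment implied by~\eqref{eq0_may13}; the argument is short but relies on the nonnegativity of both $q$ and the full SOS expression and is easy to misstate. Once that containment is in hand, everything else is bookkeeping: the compactness of $\mathcal{B}$ follows from radial unboundedness of $V$, and the matching of $\dot V\le-\rho$ between Theorem~\ref{thm2} and Theorem~\ref{thm0} is immediate since both use the same $V$ and the same $\beta$ furnished by~\eqref{eq:AbsSet_ineq}.
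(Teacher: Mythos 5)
Your proposal is correct and follows essentially the same route as the paper: the SOS certificate \eqref{eq0_may13} together with the nonnegativity of $q$ yields the containment $\{V>\gamma\}\subseteq\{\lVert x\rVert_2^2>\beta\}$, which transfers the decrease condition of Theorem~\ref{thm2} to the complement of $\mathcal{B}$ so that Theorem~\ref{thm0} applies. Your version merely spells out the verification of the hypotheses of Theorem~\ref{thm0} (compactness of $\mathcal{B}$ via radial unboundedness, $V=\gamma$ on $\partial\mathcal{B}$) that the paper leaves implicit.
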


In this paper, we first increase the degree of Lyapunov function $V(x)$ (and accordingly of $p_i(x)$)
until we obtain a solution to~\eqref{eq:AbsSet_ineq}.
Then, we repeatedly solve~\eqref{eq:AbsSet_ineq} while decreasing $\beta\ge 0$ as much as possible,
which provides tighter bounds on the absorbing set.
Increasing the value of $\ell$ allows us to reduce the number of low-degree monomials
in $V(x)$; particularly, if the degree of $V(x)$ is $2\ell$ then it consists of a homogenous polynomial function.  
Note that solving~\eqref{eq:AbsSet_ineq} for $\beta = 0$ is equivalent to finding a common Lyapunov function
for the entire state space, as in~\cite{Prajna2003}.
If not stated otherwise, we set $\delta=1$.
Additionally, to reduce the size of $\mathcal{B}$, we solve the following problem instead of \eqref{eq0_may13},
\begin{align}\label{may11-1}
	&\text{minimise} \quad  \gamma\nonumber\\
	&\text{subject to}\quad -(V(x)-\gamma)+q(x)(||x||_2^2-\beta)\ \mathrm{is\ SOS}.
\end{align}
For an illustration of absorbing set $\mathcal{B}$, see Fig.~\ref{fig0}.

\section{Asymptotic Stability for Linear Systems}\label{sec_stab}
For the special case of linear switched systems %\sfh{\st{with a common equilibrium point,}}
we have the following theorem.
\begin{thm}\label{thm1}
	Linear switched systems with asymptotically stable sub-systems and a common equilibrium point, 
	given by $\dot x=A_ix$,
	are asymptotically stable under arbitrary switching if they possess an absorbing set.
\end{thm}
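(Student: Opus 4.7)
The plan is to exploit linearity to leverage a single absorbing set into a shrinking family of them. By superposition, $\phi_\sigma(t, \alpha x_0) = \alpha \phi_\sigma(t, x_0)$ for every $\alpha > 0$ and every switching signal $\sigma$, so if $\mathcal{B}$ is absorbing and positively invariant, then the dilation $\alpha \mathcal{B} := \{\alpha x : x \in \mathcal{B}\}$ is also absorbing and positively invariant for each $\alpha > 0$ (because $x_0 \in \alpha \mathcal{B}$ iff $x_0/\alpha \in \mathcal{B}$, and both invariance and absorption translate across the scaling). Since $0$ is the common equilibrium, $\phi_\sigma(t, 0) \equiv 0$, and applying Def~\ref{def:newdef} to $C = \{0\}$ forces $0 \in \mathcal{B}$. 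Assuming $\mathcal{B}$ is compact (as it is when constructed via Theorems~\ref{thm2} and \ref{thm3}), fix $R > 0$ with $\mathcal{B} \subset \overline{B(0, R)}$, so that $\alpha \mathcal{B} \subset \overline{B(0, \alpha R)}$ for every $\alpha > 0$.

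For global attractivity, I would fix $x_0 \in \mathbb{R}^n$ and apply the absorbing property of $\alpha \mathcal{B}$ to the singleton $C = \{x_0\}$: there exists $t^*_\alpha \ge 0$ such that $\|\phi_\sigma(t, x_0)\| \le \alpha R$ for all $t \ge t^*_\alpha$ and every switching signal $\sigma$. Sending $\alpha \to 0$ gives $\phi_\sigma(t, x_0) \to 0$ uniformly in $\sigma$.

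For Lyapunov stability, I would apply the absorbing property to $C = \overline{B(0, 1)}$ to obtain a uniform time $t^*$ such that $\|\phi_\sigma(t, x_0)\| \le R$ whenever $\|x_0\| \le 1$ and $t \ge t^*$. On the remaining interval $[0, t^*]$, Gronwall's inequality yields $\|\phi_\sigma(t, x_0)\| \le e^{t^* \max_i \|A_i\|} \|x_0\|$. Combining the two bounds and extending by linearity from the unit ball to arbitrary initial conditions gives a single constant $M > 0$ with $\|\phi_\sigma(t, x_0)\| \le M \|x_0\|$ for all $t \ge 0$, $x_0 \in \mathbb{R}^n$, and all $\sigma$. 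This delivers Lyapunov stability and, combined with the previous step, global asymptotic stability under arbitrary switching.

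The only subtle point is keeping everything uniform in the switching signal, but this is built into Def~\ref{def:newdef}: the time $t^*_C$ is required to work for every $\sigma$ simultaneously, and this uniformity is inherited both by the scaling construction and by the final bound. The asymptotic stability of the individual subsystems, although stated as a hypothesis, is essentially a necessary condition already implied by the existence of a compact absorbing set around the common equilibrium, and is not invoked directly in the argument beyond ensuring that $\|A_i\|$ is finite for the Gronwall step.
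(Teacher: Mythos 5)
Your proof is correct, and while it rests on the same engine as the paper's --- the homogeneity identity $\phi_\sigma(t,\alpha x_0)=\alpha\phi_\sigma(t,x_0)$ --- the execution differs in both halves. For attractivity the paper argues by contradiction, picking a single large dilation constant and invoking the absorbing time for one large ball, whereas you argue directly by observing that every dilation $\alpha\mathcal{B}$ is again an absorbing set and letting $\alpha\to 0$; the two are essentially equivalent, yours being slightly cleaner in that it delivers convergence (uniform in $\sigma$) without the unbounded-sequence bookkeeping. The more substantive divergence is in the stability half: the paper uses only positive invariance of $\mathcal{B}$, which forces it to assume that $\mathcal{B}$ contains an open ball $\mathcal{B}_{c\delta}$ around the origin --- a property that holds for the sublevel sets constructed in Theorems~\ref{thm2} and~\ref{thm3} but is not part of Definition~\ref{def:newdef} and is not justified in the proof. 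You instead combine the absorbing property on the closed unit ball (giving a uniform entry time $t^*$) with a Gronwall bound on the transient $[0,t^*]$, which only needs $0\in\mathcal{B}$ (correctly deduced from $C=\{0\}$) and boundedness of $\mathcal{B}$. This buys you the stronger conclusion $\|\phi_\sigma(t,x_0)\|\le M\|x_0\|$ uniformly in $t$ and $\sigma$, i.e.\ uniform Lyapunov stability with an explicit gain, at the modest cost of invoking the ODE estimate $\|\phi_\sigma(t,x_0)\|\le e^{t\max_i\|A_i\|}\|x_0\|$. One small imprecision: the bound $M\|x_0\|$ should be derived on the unit \emph{sphere} and then scaled by homogeneity (on the unit ball the post-$t^*$ bound $R$ is not itself of the form $M\|x_0\|$ for small $\|x_0\|$), but this is exactly what your ``extend by linearity'' step amounts to, so the argument stands.
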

\begin{proof}
	Essentially, this follows from the homogeneity of linear systems.
	That is, for a given switching $\sigma$ and a constant $c>0$,
	we have for the solution to the switched linear system, that $\phi_\sigma(t,cx_0)=c\phi_\sigma(t,x_0)$.
	In the following, $\mathcal{B}_r$ denotes the open ball centred at the origin with radius $r>0$.
	
	First, we show stability of the origin. Let constant  $\varepsilon>0$ be given.  Choose the constant $c>0$ so large that  $\mathcal{B}\subset\mathcal{B}_{c\varepsilon}$
	and choose the constant $\delta >0$ so small that $\mathcal{B}_{c\delta}\subset\mathcal{B}$.
	Since $\mathcal{B}$ is positively invariant for the switched system, we have for every  switching that the solution $t\mapsto \phi_\sigma(t,x_0)$ stays in $\mathcal{B}_{c\varepsilon}$ for all $t\ge0$, whenever $x_0\in\mathcal{B}_{c\delta}\subset \mathcal{B} $.
	Thus, $\phi_\sigma([0,\infty),\mathcal{B}_{c\delta}) \subset \mathcal{B}_{c\varepsilon}$ and by the homogeneity property $$
	\phi_\sigma([0,\infty),\mathcal{B}_{\delta}) \subset \mathcal{B}_{\varepsilon}.
	$$
	That is, stability follows by the homogeneity property. For an illustration of above argumentation, see Fig.~\ref{fig0}.

	To conclude the proof, we show that $\lim_{t\to \infty}\phi_\sigma(t,x_0)=0$ for every switching and every $x_0\in\mathbb{R}^n$.  Assume, by way of contradiction, that this does not hold true. Then, there exists a switching signal $\sigma$, an $x_0\in\mathbb{R}^n$, an $\varepsilon>0$, and an unbounded sequence of times $(t_n)$, such that $\|\phi_\sigma(t_n,x_0)\|_2> \varepsilon$ for all $n\in \mathbb{N}$.  Choose the constant $c>0$ so large that  that $\mathcal{B}\subset\mathcal{B}_{c\varepsilon}$.  By the definition of $t^*_C$ with $C=\overline{\mathcal{B}_{c\varepsilon}}$, cf.~Definition \ref{def:newdef}, we have $\phi_\sigma(t,cx_0)\in \mathcal{B}\subset \mathcal{B}_{c\varepsilon}$ for all $t\ge t^*_C$.  By the homogeneity property it follows that $\phi_\sigma(t,x_0) \in \mathcal{B}_{\varepsilon}$ for all $t\ge t^*_C$, which is a contradiction and we have proved the theorem.
	
\end{proof}

Finally, Theorem~\ref{thm1} has an obvious corollary.
\begin{cor}\label{cor1}
	Solutions of linear switched switched systems with asymptotically stable sub-systems given by
	$\dot x=A_ix$
	cannot have a periodic solution for any switching $\sigma$ if the switched system as a whole has an absorbing set. 
\end{cor}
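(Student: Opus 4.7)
The plan is to derive the corollary as a direct contradiction argument using Theorem~\ref{thm1}. Since the hypotheses of Corollary~\ref{cor1} coincide exactly with those of Theorem~\ref{thm1} (linear switched system, asymptotically stable subsystems, existence of an absorbing set), I get for free that the switched system is globally asymptotically stable under arbitrary switching, i.e., $\lim_{t\to\infty}\phi_\sigma(t,x_0)=0$ for every switching signal $\sigma$ and every initial state $x_0\in\mathbb{R}^n$.

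Next, I would set up the contradiction. Suppose, for the sake of argument, that a periodic solution exists for some switching $\sigma$: that is, there is an $x_0\neq 0$ and a period $T>0$ such that $\phi_\sigma(t+T,x_0)=\phi_\sigma(t,x_0)$ for all $t\ge 0$. Evaluating this identity at $t=0$ and iterating, I obtain $\phi_\sigma(nT,x_0)=x_0$ for all $n\in\mathbb{N}$. In particular, $\|\phi_\sigma(nT,x_0)\|_2=\|x_0\|_2>0$ for all $n$, so the sequence $\phi_\sigma(nT,x_0)$ does not tend to the origin. This contradicts the asymptotic stability conclusion of Theorem~\ref{thm1}, which forces $\phi_\sigma(t,x_0)\to 0$ along every subsequence of times going to infinity.

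Finally, I would note the trivial case $x_0=0$: by linearity of each $A_i$ and the assumption of a common equilibrium at the origin, $\phi_\sigma(t,0)\equiv 0$, which is a stationary (not a genuine) periodic solution, so it does not interfere with the argument. Thus no nontrivial periodic orbit can exist.

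I do not foresee a genuine obstacle here; the corollary is essentially a rephrasing of Theorem~\ref{thm1}. The only mild subtlety worth flagging in the write-up is the convention used for a periodic solution of a switched system (i.e., making clear that both the state trajectory and, implicitly, the relevant switching signal must repeat with period $T$), but this does not affect the contradiction: whatever reasonable definition of periodicity is adopted, it forces $\|\phi_\sigma(nT,x_0)\|_2$ to be bounded away from zero, which is incompatible with the convergence to the origin established in Theorem~\ref{thm1}.
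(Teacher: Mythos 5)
Your argument is correct and is precisely the ``obvious'' argument the paper has in mind (the paper states the corollary without proof as an immediate consequence of Theorem~\ref{thm1}): global asymptotic stability forces $\phi_\sigma(t,x_0)\to 0$, which is incompatible with $\phi_\sigma(nT,x_0)=x_0\neq 0$ for all $n$. Your handling of the trivial equilibrium solution and the remark on the periodicity convention are appropriate but not essential.
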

%\vspace{-2mm}
\begin{figure}[hbtp]
	\hspace{-24mm}
	\vspace{-8mm}
	\includegraphics[scale = 0.3]{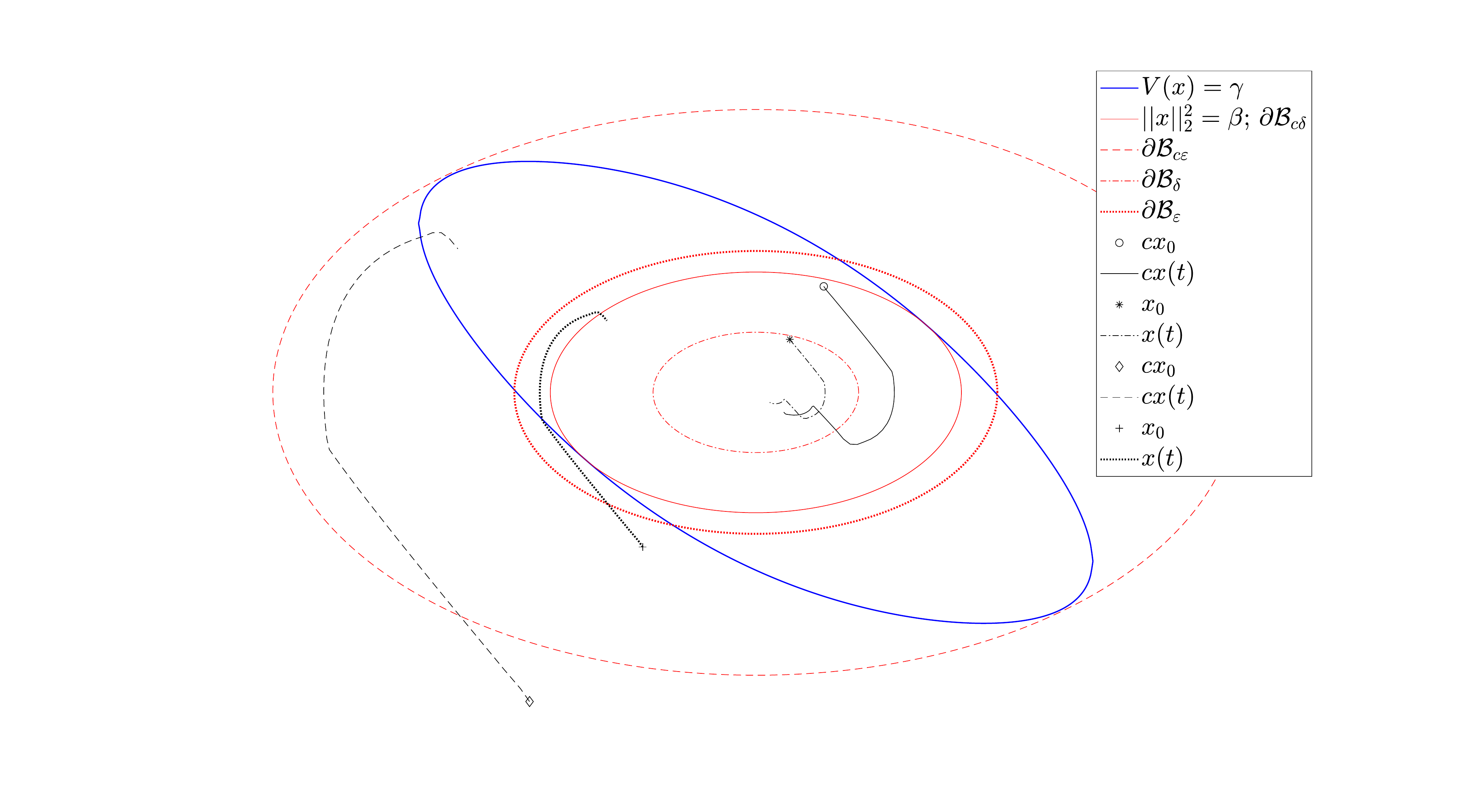}
	\caption{The figure shows $\partial\mathcal{B}$, which is given by $V(x)=\gamma$, and
		its relation to $||x||_2^2=\beta$ (see main text). Moreover, it depicts two exemplary trajectories used
		to prove the first part of Theorem~\ref{thm1} (`$\circ$' \& `$*$') and two used in the second part,
		where we choose $c$ sufficiently large such that, for $t>t_C^*$, the trajectory starting 
		at `$\diamond$' is and remains in $\mathcal{B}_{c\varepsilon}$, which implies that
		the trajectory starting at `$+$' is and remains in $\mathcal{B}_\varepsilon$ for $t>t_C^*$.}
	\label{fig0}
\end{figure}

\section{Examples}\label{examples}
In this section, we present a few examples. All problems are solved on a MacBook Pro with a 2.3 GHz quad-core Intel Core i5 processor. Furthermore, we solve linear matrix inequalities using semidefinite programming, for which we use YALMIP~\cite{Yalmip}, a MATLAB\textregistered~\cite{MATLAB} toolbox. We solve SOS optimisation problems using SOSTOOLS~\cite{Sostools}, another MATLAB\textregistered\ toolbox. In both cases, we solve the problems using the solver SeDuMi~\cite{Sturm99}.

\subsection{Linear Switched System}\label{ex1_lin}
The first problem is interesting, since it has been often investigated
in the literature~\cite{Andersen2023} and shows the potential of solving SOS optimisation problems.
The problem consists of determining the asymptotic stability of a planar linear switched system given by
\begin{equation}\label{eq_lin_b}
	\dot{x}= A_{\sigma(t)}x,\ \sigma(t)\in\{ 1,2\},
\end{equation}
where
\begin{equation}
	A_1= \left[\begin{array}{cc}0&1\\-0.1&-2\end{array}\right],\
	A_2= \left[\begin{array}{cc}0&1\\-b&-2\end{array}\right]. \label{eq:2d_ab}
\end{equation}
First, note that by solving the following linear matrix inequality,
\begin{equation}
	P = P^\mathrm{T} \succ 0,\ PA_i + A_i^\mathrm{T}P \prec 0,\ i = 1,2,
\end{equation}
one obtains a common quadratic Lyapunov function up to $b \leq 5.36$.

Now, we let $\beta=0$ and the degree of $V(x)$ be $2\ell$, that is, we search for a common homogenous polynomial Lyapunov function that  guarantees global asymptotic stability. We solve~\eqref{eq:AbsSet_ineq} for $\beta=0$ (and $\delta=0.001$) for increasingly larger values of $\ell$, which allows us to increase the value of $b$, however, not beyond $b=12$, which is reached for $\ell=6$.
Since the existence of a homogenous polynomial that is SOS, of sufficiently high degree ($\ell>>1$), and solves~\eqref{eq:AbsSet_ineq} is a necessary condition for stability~\cite{CHESI2011621}, we continue increasing the value of $\ell$. However, solving~\eqref{eq:AbsSet_ineq} for $\ell\geq10$ leads to numerical problems. The Lyapunov function, that we obtain, is given by
\begin{align*}
	V(x) &=   1326.8x_1^{12} + 3997.0355x_1^{11}x_2 + 13366x_1^{10}x_2^2 \\
	&+22545x_1^9x_2^3 + 24318x_1^8x_2^4 + 17999x_1^7x_2^5 \\
	&+ 10097x_1^6x_2^6 + 4333.6x_1^5x_2^7 + 1379.2x_1^4x_2^8 \\
	&+ 304.99x_1^3x_2^9 + 44.607x_1^2x_2^{10} + 3.9466x_1x_2^{11} \\
	&+ 0.1836x_2^{12}.
\end{align*}
It follows from Theorem~\ref{thm1} that the switched system presented in this example is globally asymptotically stable for $b=12$.
Finally, $b=12$ much improves the previously reported result of $b=5.36$ obtained by solving a linear matrix inequality, particularly, as simulations show that the switched system becomes unstable for $b=13.26$.

\subsection{Affine Switched System with 2 Equilibrium Points}\label{ex1_lin_mul}
In this example, we consider the switched dynamical system given by
\begin{equation}\label{eq_lin_mul}
	\dot{x}= A_{\sigma(t)}x+d_{\sigma(t)},\ d_1=0,\
	d_2= \left[\begin{array}{c}1\\1\end{array}\right],\ \sigma(t)\in\{ 1,2\},
\end{equation}
where $A_1$ and $A_2$ are given by~\eqref{eq:2d_ab} for $b=2$.
Note that the equilibrium point of one subsystem is the origin, while for the other one it is not.
For $\ell=2$ and $\beta=3.3$, we obtain a solution for~\eqref{eq:AbsSet_ineq}.

Specifically, we obtain a homogenous polynomial Lyapunov function of degree 4 that is given by
\begin{align}\label{V_lin_mul}
	V(x) &= 436.8x_1^4 + 929.2x_1^3x_2 + 963.1x_1^2x_2^2 \nonumber\\
	&+ 519.2x_1x_2^3+168.1x_2^4.
\end{align}
Using this value for $\beta$, we solve~\eqref{may11-1} to obtain the boundary of absorbing set $\mathcal{B}$, given by $V(x)=\gamma$, where $\gamma=8725$ (see Fig.~\ref{fig1}).
\vspace{-2mm}
\begin{figure}[h]
	\hspace{0mm}
	\vspace{-4mm}
	\includegraphics[scale=.215]{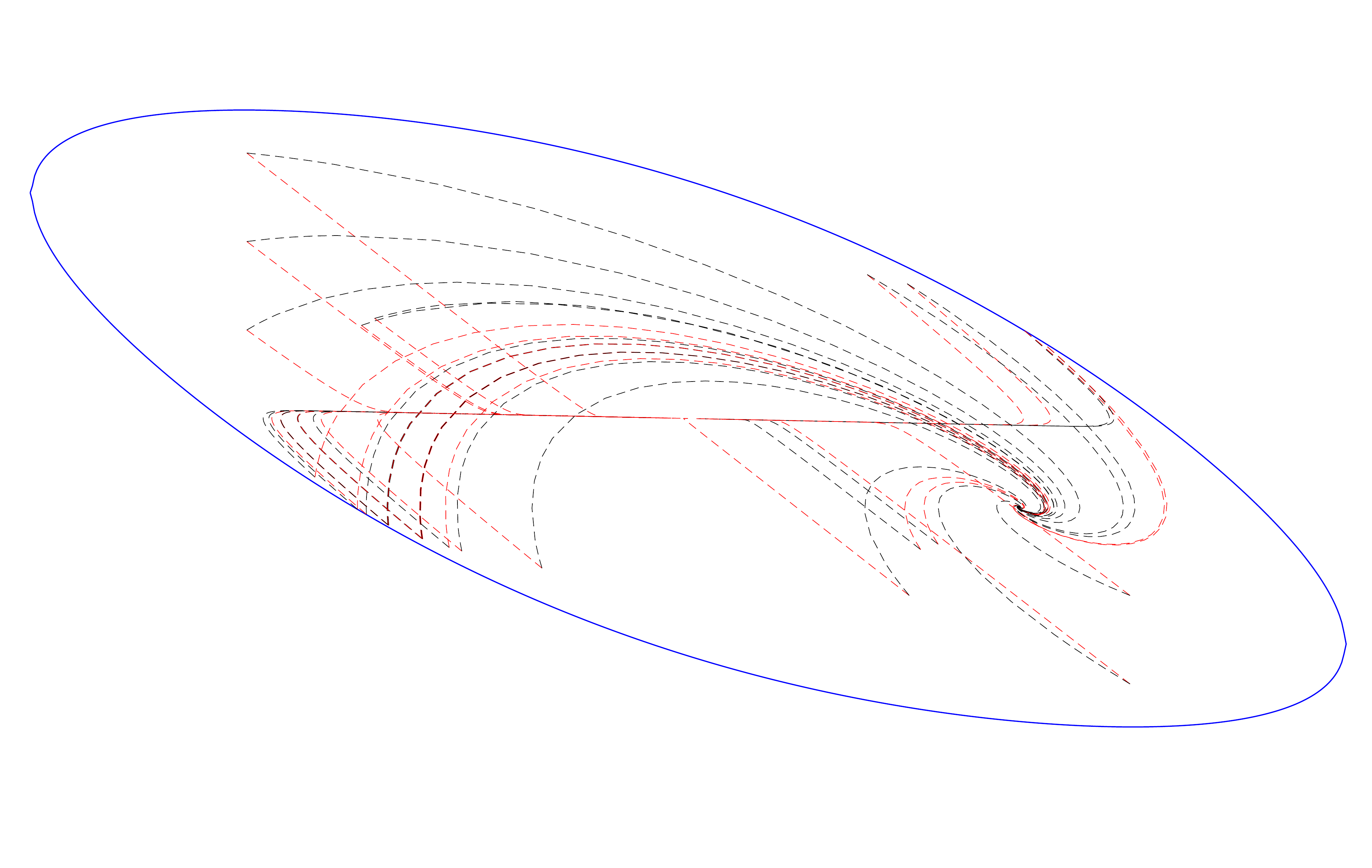}
	\caption{The figure shows the boundary of the absorbing set of~\eqref{eq_lin_mul}
		with a few exemplary system trajectories of $\dot x=A_1x$ and  $\dot x=A_2x+d_2$. The boundary is given by
		$V(x)=8725$ and $V(x)$ is given by~\eqref{V_lin_mul}.The figure lies in the area $[-3, 3] \times [-4, 4]$.}
	\label{fig1}
\end{figure}
\subsection{Affine Switched System with 3 Equilibrium Points}
Here, we compare our approach to the one in~\cite{Alpcan2010}, which depends on dwell time, when applied to Example 4.2 in~\cite{Alpcan2010}, which considers the following switched system that has multiple equilibria,
\begin{equation}\label{eq:alpacan}
	\dot{x}= Ax + b_{\sigma(t)},\  \sigma(t) \in \{1, 2, 3\},
\end{equation}
where
\begin{equation*}
	A = \left[\begin{array}{cc}	-1 & -1 \\ 1 & -1	\end{array}\right],
	b_1 = \left[\begin{array}{c}	         1 \\ 1 \end{array}\right]
	b_2 = \left[\begin{array}{c}	        -1 \\ 1 \end{array}\right]
	b_3 = \left[\begin{array}{c}		 1 \\ -1 \end{array}\right].
\end{equation*}
Solving~\eqref{eq:AbsSet_ineq}, with $\ell=2$, and $\beta=2$, we find a fourth order homogenous common Lyapunov function given by
\begin{align*}
	V(x) &= 8.7957x_1^4 + 1.8977x_1^3x_2 + 17.4811x_1^2x_2^2\\
	&- 1.5706x_1x_2^3  + 9.3477x_2^4.
\end{align*}
We then solve~\eqref{may11-1}, for $\beta=2$ and obtain $\gamma=38.43$, which guarantees boundedness
of system solutions independent of dwell time (see Fig.~\ref{fig5}); in~\cite{Alpcan2010} the average dwell time was required to be bounded away from zero.  
\vspace{-3mm}
\begin{figure}[h]
	\hspace{-7mm}
	\vspace{-7mm}
	\includegraphics[scale=.235]{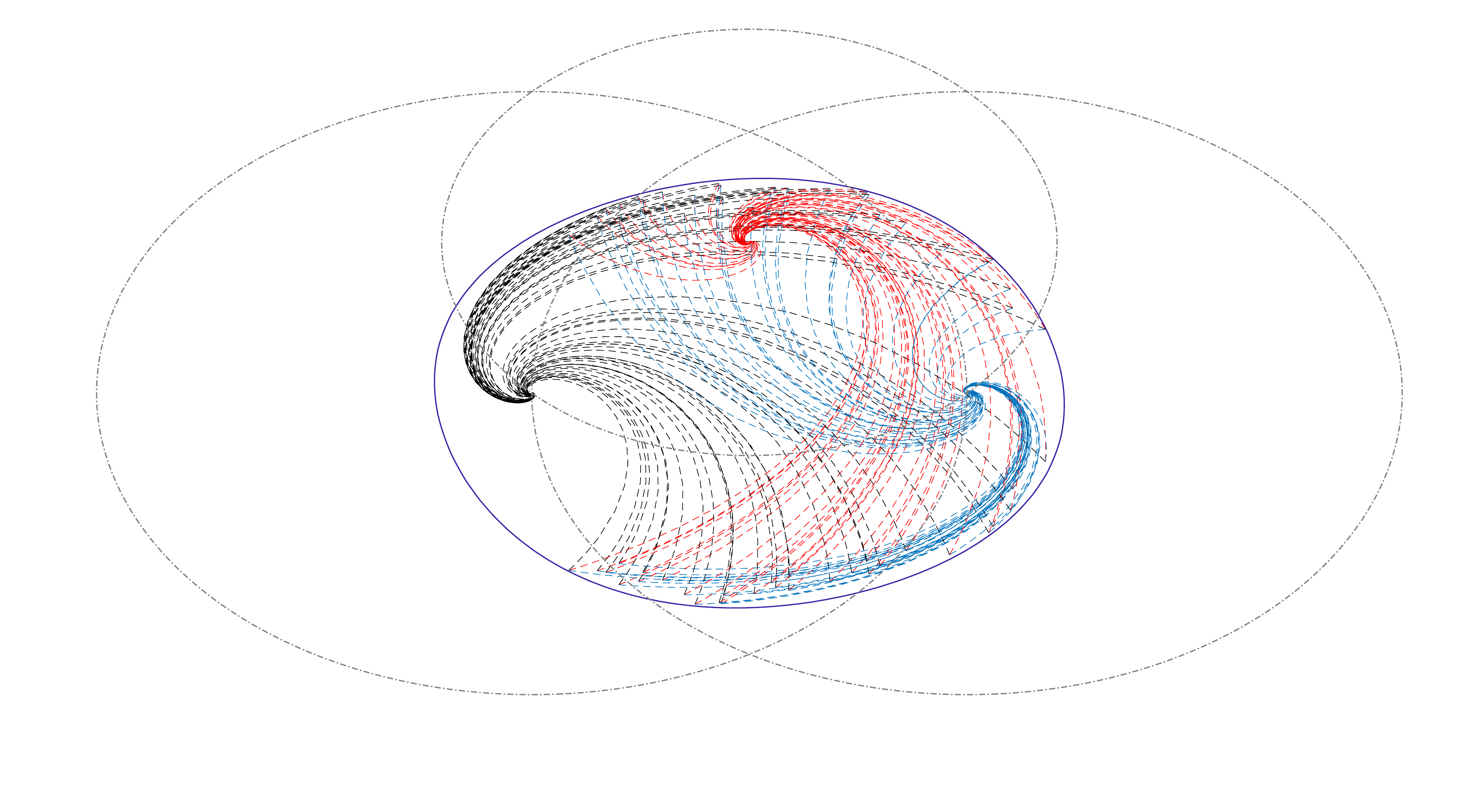}
	\caption{The figure shows exemplary trajectories of~\eqref{eq:alpacan} and the boundary of its absorbing set. We overlap the bounded absorbing set obtained using our approach with the absorbing sets, depicted in grey dash-dotted lines, from~\cite{Alpcan2010}. The figure lies in the area $[-3.5, 3.5] \times [-3.5, 3.5]$.}
	\label{fig5}
\end{figure}

\subsection{Nonlinear Switched System with Unique Equilibrium}\label{ex2_nonlin}
Consider the nonlinear switched system given by
\begin{equation}\label{eq_ex2_nonlin}
	\dot{x}= A_{\sigma(t)}(x)x,\ \sigma(t)\in\{ 1,2\},
\end{equation}
where
\begin{align*}
	A_1(x)&= \left[\begin{array}{ccc} 0.2868-x_2^2&    1.5387&    0.1731\\
		-0.3628&    0.0893&   -0.6175\\
		0.0892&    1.2898&   -1.4316\end{array}\right],\\
	A_2 &= \left[\begin{array}{ccc} -1.5007  &  1.3875&   -0.4402\\
		0.4919&   -1.5442&    0.1360\\
		0.2914&   -0.4561&    0.0231\end{array}\right].
\end{align*}
First, to solve~\eqref{eq:AbsSet_ineq} for $\beta=0$ and, thus, to guarantee asymptotic stability, $V(x)$ must be a SOS polynomial of degree 6. Note that we set $\ell=2$. On the other hand, solving \eqref{eq:AbsSet_ineq} and, then, \eqref{may11-1}, for $\beta=5$, we can
guarantee boundedness of solutions with a Lyapunov function $V(x)$ that is a SOS polynomial of degree 4 (see Fig.~\ref{fig3D}). Significantly, by doing so, the size of the resulting semidefinite programme reported by SOSTOOLS goes down from having $1339$ equalities and $262$ decision variables to having $444$ equalities and $125$ decision variables for solving~\eqref{eq:AbsSet_ineq} and
$116$ equalities and $41$ decision variables for solving~\eqref{may11-1}.
\begin{figure}[h]
	\hspace{-1mm}
	\vspace{-4mm}
	\includegraphics[scale=.235]{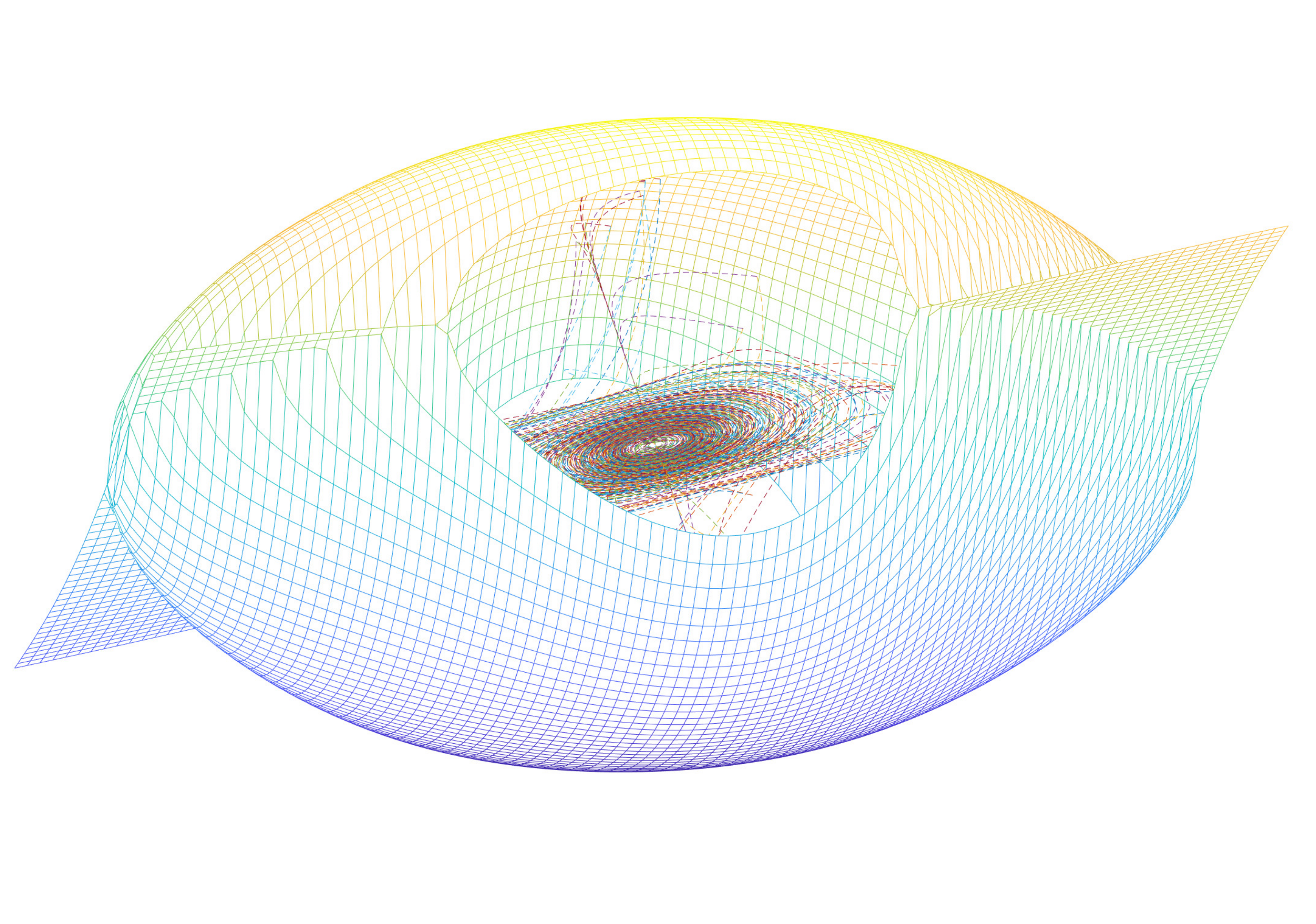}
	\caption{The figure shows the boundary of the absorbing set of \eqref{eq_ex2_nonlin}
		with a few exemplary system trajectories of $\dot x=A_1(x)x$ and  $\dot x=A_2x$. The figure lies in the space $[-5, 5] \times [-2, 2] \times [-3, 3]$.}
	\label{fig3D}
\end{figure}

\subsection{Nonlinear Switched System with Limit Cycle}\label{nonlin_vdp}
In this example, we apply our approach to the analysis of a nonlinear switched system, where one subsystem consist of a Van der Pol oscillator. The system is given by
\begin{equation}\label{eq_vdp}
	\dot{x}= f_{\sigma(t)}(x),\ \sigma(t)\in\{ 1,2\},
\end{equation}
where
\begin{equation}
	\begin{aligned}
		f_1(x) &= \left[\begin{array}{c}
			x_2 \\ -x_1 -(x_1^2 - 1)x_2\end{array}\right],\\
		f_2(x) &= \left[\begin{array}{c}
			x_2 \\-6x_1 - 2x_2 \end{array}\right].	
	\end{aligned}\label{eq_nonlin1}
\end{equation}

Note that subsystem $f_1(x)$ admits a limit cycle around the origin, which is an unstable equilibrium point of the subsystem.
However, we can show that the system possess an absorbing set with the origin in its interior. Specifically, for $\ell=1$, $\delta = 0.0001$, and $\beta = 14$, solving~\eqref{eq:AbsSet_ineq} and, then,~\eqref{may11-1}, we can bound system trajectories.
Specifically, we obtain a Lyapunov function of degree 6 that defines the boundary of the absorbing set $\mathcal{B}$ (see Fig.~\ref{fig2}).
\vspace{-2mm}
\begin{figure}[h]
	\hspace{-3mm}
	\vspace{-3mm}
	\includegraphics[scale=.225]{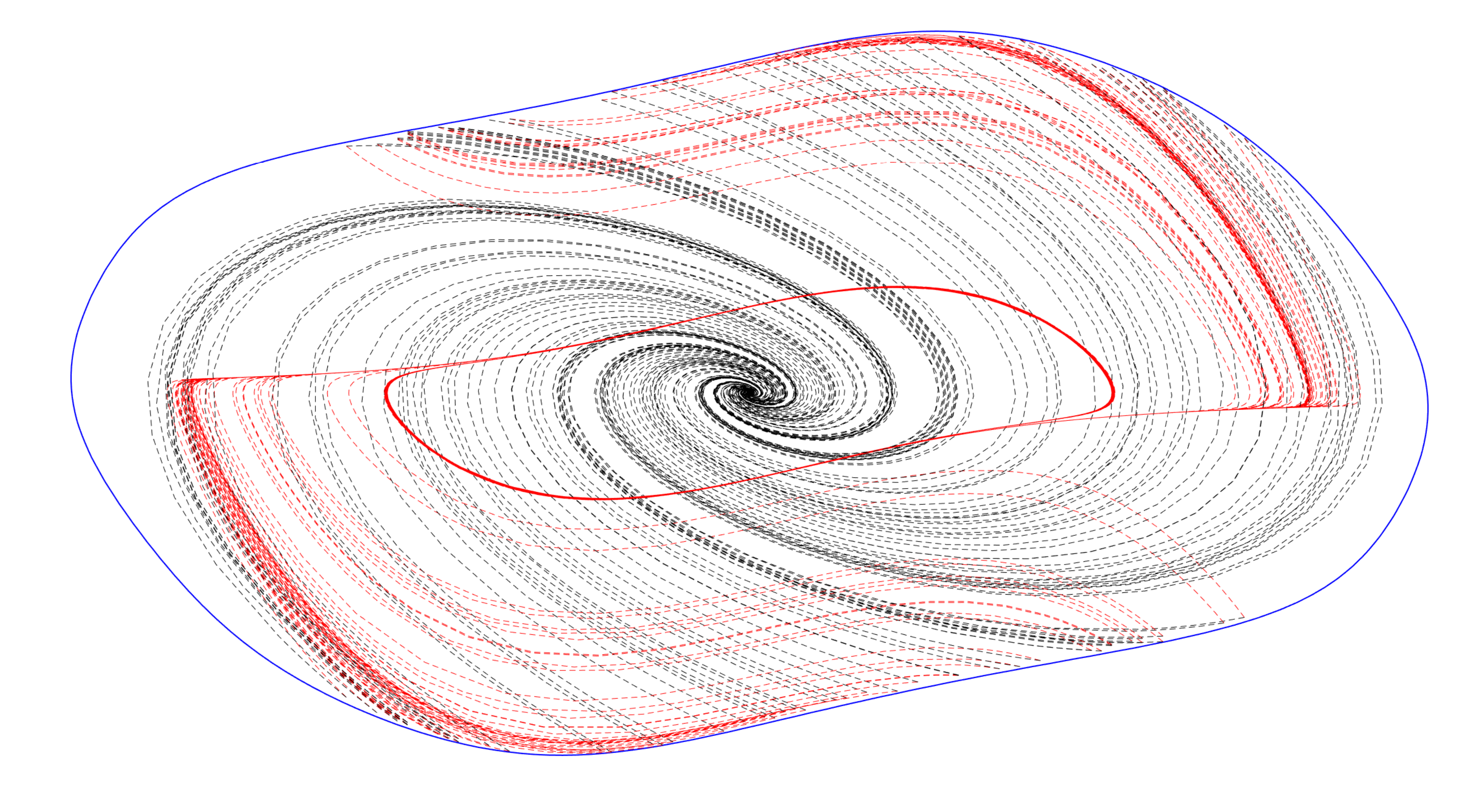}
	\caption{The figure shows the boundary of the absorbing set of~\eqref{eq_vdp}
		with a few exemplary system trajectories. The figure lies in the area $[-3.8, 3.8] \times [-9.5, 9.5]$.}
	\label{fig2}
\end{figure}

\section{Conclusions}\label{conc}
Using SOS optimisation, in this paper, we presented a novel approach to provide stability certificates for switched linear and nonlinear systems, whose dynamics are described through polynomial vector fields, under arbitrary switching. We did so, by providing means to search for certificates of ultimate boundedness of the switched system under consideration. We also showed that, for linear switched systems, the existence of an absorbing set implies global asymptotic stability.
We applied the presented approach to different examples to illustrate that it allows to locate an absorbing set, even when the switched system is composed of subsystems with distinct equilibrium points or possessing limit cycles. Furthermore, we showed that if guaranteeing boundedness of solutions is sufficient, as opposed to guaranteeing global asymptotic stability of an equilibrium, then our novel method might achieve this with reduced computational effort.

\appendix
%\section*{APPENDIX}\label{app}
\subsection*{Sum of Squares Decomposition}
Consider the real-valued polynomial function $F(x)$ of degree $2d$,
$x\in\mathbb{R}^n$. %Testing for non-negativity of $F(x)$ is NP-hard~\cite{KGMSNK87}. However, 
A sufficient condition for $F(x)$ to be nonnegative is that it can be decomposed into a SOS~\cite{Parrilo2003}:
%\begin{equation}\label{oct20-4}
$F(x)=\sum_i f_i^2(x)\geq0$,
%\end{equation}
where $f_i$ are polynomial functions.
%Now, 
$F(x)$ is a SOS if and only if there exists a positive semidefinite 
matrix $R$ and $F(x)=\chi^\mathrm{T}R\chi$, where
%\begin{eqnarray}\label{oct20-4_2}
%F(x)=\sum_i f_i^2(x)=\chi^\mathrm{T}R\chi,&&\nonumber\\%\qquad\qquad\qquad\qquad\qquad\ \nonumber\\
$$\chi^\mathrm{T}=\left[\begin{array}{cccccccc} 1 &x_{(1)} &\ldots& x_{(n)} &x_{(1)}x_{(2)}&\ldots& x^d_{(n)} \end{array}\right].$$
%&&
%\end{eqnarray}
The entries of vector $\chi$ consist of all monomial combinations of the elements of vector $x$ up to degree $d$ (including $x_{(i)}^0=1$) and, thus, its
length is $\ell=\binom{n+d}{d}$. Note that
$R$ is not necessarily unique. However, $F(x)=\chi^\mathrm{T}R\chi$ %Eq.~(\ref{oct20-4_2})
poses certain constraints
on $R$ of the form tr$(A_jR)=c_j$, where $A_j$ and $c_j$ are
appropriate matrices and constants respectively. 
In general, in order to find $R$, we solve the optimisation problem
associated with the following semidefinite programme:
\begin{eqnarray*}\label{sos0}
	\mathrm{min}\ &&\mathrm{tr}(A_0R) \nonumber\\
	\mathrm{s.t.}\ &&\mathrm{tr}(A_jR)=c_j,\ j=1,\ldots,m \nonumber\\
	&& R=R^\mathrm{T}\succeq0.
\end{eqnarray*}
%In this paper, to formulate such semidefinite programmes, we use
%SOSTOOLS~\cite{Sostools}.

\section*{Acknowledgment}
The authors would like to thank the editor and the anonymous reviewers for their valuable comments, which substantially improved the paper.
%The preferred spelling of the word ``acknowledgment'' in American English is 
%without an ``e'' after the ``g.'' Use the singular heading even if you have 
%many acknowledgments. Avoid expressions such as ``One of us (S.B.A.) would 
%like to thank $\ldots$ .'' Instead, write ``F. A. Author thanks $\ldots$ .'' In most 
%cases, sponsor and financial support acknowledgments are placed in the 
%unnumbered footnote on the first page, not here.

\bibliography{cdc2023_bib}
\bibliographystyle{IEEEtran}

\end{document}